\numberwithin{equation}{section}
\newtheorem{theorem}{Theorem}[section]
\newtheorem{lemma}{Lemma}[section]
\newtheorem{Corollary}{Corollary}[section]
\newtheorem{remark}{Remark}[section]
\newcommand{\nt}{\noindent}
\newcommand{\ovl}{\overline}
\newcommand{\ti}{\tilde}
\newcommand{\lt}{\left}
\newcommand{\rt}{\right}
\newcommand{\bn}{{\mathbb{N}}}
\newcommand{\br}{{\mathbb{R}}}
\newcommand{\bt}{{\mathbb{T}}}
\renewcommand{\a}{\alpha}
\newcommand{\s}{\sigma}
\renewcommand{\ss}{\Sigma}
\newcommand{\p}{\phi}
\newcommand{\pp}{\Phi}
\newcommand{\ps}{\Psi}
\renewcommand{\th}{\theta}
\renewcommand{\d}{\delta}
\newcommand{\g}{\gamma}
\newcommand{\ga}{\Gamma}
\newcommand{\e}{\epsilon}
\newcommand{\ve}{\varepsilon}
\renewcommand{\lll}{\lesssim}
\renewcommand{\ggg}{\gtrsim}
\newcommand{\ov}{{\ovl o}}
\newcommand{\wto}{{\stackrel{*}{\to}}}
\newcommand{\nk}{{N_k}}
\newcommand{\sz}{(S)}
\begin{document}

\title[On the growth of the polynomial entropy integrals \ldots]
{On the growth of the polynomial entropy \\integrals for  measures
in the Szeg\H{o} class}

\author[S. Denisov, S. Kupin]{S. Denisov, S. Kupin}

\address{Mathematics Department, University of Wisconsin--Madison,   480 Lincoln Dr.,  Madison, WI 53706 USA}
\email{denissov@math.wisc.edu}

\address{Universit\'e Bordeaux, IMB, UMR 5251, F-33405 Talence Cedex France \newline\indent
CNRS, IMB, UMR 5251, F-33405 Talence, France.}
\email{skupin@math.u-bordeaux1.fr}

\keywords{Entropy integrals, orthogonal polynomials, Schur
parameters, Szeg\H o class}

\thanks{The first author is partially supported by NSF Grants DMS-1067413 and DMS-0758239.
The second author is partially supported by grants  ANR-07-BLAN-024701 and  ANR-09-BLAN-005801}

\subjclass{Primary: 47B36, Secondary: 42C05.}

\begin{abstract}
Let $\s$ be a probability Borel measure on the unit circle $\bt$
and $\{\p_n\}$ be the orthonormal polynomials with respect to $\s$.
We say that $\s$ is a Szeg\H o measure, if it has an arbitrary
singular part $\s_s$, and $\int_\bt \log \s' dm>-\infty$, where
$\s'$ is the density of the absolutely continuous part of $\s$, $m$
being the normalized Lebesgue measure on $\bt$.  The entropy
integrals for $\p_n$ are defined as
$$
\e_n=\int_\bt |\p_n|^2\log |\p_n| d\s
$$
It is an easy exercise to show that $\e_n=\overline{o}(\sqrt n)$. In
this paper, we construct a measure from the Szeg\H{o} class for which this estimate
is sharp (over a
subsequence of $n$'s).
\end{abstract}

\maketitle

\section{Introduction}\label{s0}
Let $\s$ be a probability Borel measure on the unit circle $\bt=\{z: |z|=1\}$.
 The moments $c_k=c_k(\s)$, the Schur parameters $\g_k=\g_k(\s)$, the orthonormal
  polynomials $\p_n=\p_n(\s)$ with respect to the measure as well as their monic
  versions $\pp_n=\pp_n(\s)$ are defined in the standard way, see Simon \cite[Ch. 1]{si1}
  for definitions and terminology. We often indicate the dependence on the measure explicitly
   to avoid the misunderstanding.

It is quite reasonable to ask the following question: does some
additional condition on the measure provide  nontrivial bounds on
the size of the polynomials $\p_n$ beyond the normalization
\[
\int_{\mathbb{T}} |\phi_n(z)|^2d\sigma=1\, ?
\]
The size can be controlled by $L^p(d\sigma)$ norm ($p>2$) or by
other quantities. This problem is classical and was addressed,
for instance, in the framework of Steklov's conjecture \cite{ra1} by Rakhmanov (see also \cite{amb})
where the $L^\infty(\mathbb{T})$ norms were studied.

In this paper, we measure the size of the orthonormal polynomials by taking the entropy
integrals
\begin{equation}\label{e1}
\e_n(\s)=\int_\bt |\p_n|^2\log |\p_n| d\s
\end{equation}
Notice here that (set $\log x=\log^+ x -\log^- x$)
\[
\int_\bt |\p_n|^2\log^- |\p_n| d\s<1
\]
so only
\[
\e_n^+=\int_\bt |\p_n|^2\log^+ |\p_n| d\s
\]
can contribute to the growth of $\e_n$.

We say that $\s$ is a Szeg\H o measure (notation: $\s\in\sz$), if
its singular part $\s_s$ is arbitrary, and
$$
\int_\bt \log \s' dm>-\infty,
$$
where $\s'$ is the density of the absolutely continuous (a.c., for shorthand) part of $\s$ and
 $dm=dm(t)= d\th/(2\pi), t=e^{i\th}\in \bt, $ is the normalized
 Lebesgue measure on $\bt$. One might think that the Szeg\H{o} condition is relevant
 to the entropy integrals for the following reason. Assume first
 that $\sigma$ is purely absolutely continuous with the smooth positive
 density: $d\sigma=p(\theta)dm$ and
 $p(\theta)=|\pi(\theta)|^{-2}$ where $\pi(z)$ is an outer function on
 $\mathbb{D}$ such that $\pi^{-1}(z)$ is in the Hardy space $H^2(\mathbb{D})$. Then,
 one can easily show that $\phi_n(z)$ goes to $\pi(z)$
 uniformly on $\overline{\mathbb{D}}$. What happens to the entropy
 integrals? Obviously,
 \begin{equation}
 \e_n=\int_\mathbb{T} |\p_n|^2 \log|\p_n|\frac{dm}{|\pi(z)|^2}\to
 \int_\mathbb{T} \log |\pi(z)|dm \label{nons}
 \end{equation}
Now, it one considers $\sigma\in (S)$ instead, then the convergence
of the polynomials is not uniform but the right-hand side in
(\ref{nons}) does exist. So, one can conjecture that $\e_n$ has a
limit without any smoothness assumptions and that the only condition
needed is $\sigma\in (S)$. This conjecture is well-known in
the orthogonal polynomials community and attracted some attention
recently (see Beckermann et al. \cite{beck} and Aptekarev et al. \cite{ap1, ap2}). In \cite{beck}, for example, the entropies were studied
for the polynomials on the real line and under additional assumption
that the measure is a.c.

In this paper, we do not make this additional assumption. We conjecture that the construction from theorem 1.1 can be adjusted to produce an a.c.
measure $\sigma$ (see remark \ref{reser} below).

In the following theorem, we construct a Szeg\H o measure with
unbounded $\e_n$'s thus proving that
the above reasoning (\ref{nons}) is not true for general Szeg\H o measures.

\begin{theorem}\label{t1} There is $\s\in\sz$ and a subsequence $\{M_k\}$ such that
$$
\e_{M_k}(\s)=\int_\bt |\p_{M_k}(\s)|^2\log |\p_{M_k}(\s)| d\s =\bar
o(\sqrt{M_k})
$$
as $k\to\infty$.
\end{theorem}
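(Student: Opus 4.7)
The plan is to construct $\s$ via its Schur parameters $\{\g_n\}$ and then verify the announced growth of $\e_{M_k}$ by direct analysis of the Szeg\H{o} recursion. Verblunsky's theorem identifies $\sz$ with the set of $\ell^2$-sequences in $\bd$, so the task reduces to choosing $\{\g_n\}$ with $\sum_n|\g_n|^2<\infty$ whose orthonormal polynomials exhibit peaks approaching the $L^\infty$ upper bound $\|\p_n\|_\infty\lll e^{O(\sq n)}$ that follows directly from iterating the Szeg\H{o} recursion.

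I would take $\{\g_n\}$ to be a sparse ``block'' sequence. Fix a fast-growing subsequence $\{M_k\}$ (say $M_k=2^{k^2}$) and place nonzero Schur parameters only in a window $\mathcal{B}_k=[M_k-L_k,M_k-1]$ of length $L_k$; inside the window take $\g_n=\a_k\bar\zeta_k^n$ for a target point $\zeta_k\in\bt$, and set $\g_n=0$ otherwise. Between windows the recursion $\p_{n+1}=\r_n^{-1}(z\p_n-\bar\g_n\p_n^*)$ collapses to the trivial $\p_{n+1}(z)=z\p_n(z)$. Inside a window the resonant phase makes the product of $L_k$ transfer matrices, evaluated at $z=\zeta_k$, behave like the $L_k$-th power of a fixed matrix with top eigenvalue $(1+|\a_k|)/\sq{1-|\a_k|^2}\approx e^{|\a_k|}$, so $|\p_{M_k}(\zeta_k)|$ is amplified by roughly $e^{L_k|\a_k|}$ per block. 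Aligning the resonances across blocks (for example, taking all $\zeta_k$ equal) yields the cumulative estimate
\[
|\p_{M_k}(\zeta_k)|\ggg\exp\Bigl(c\sum_{j\leq k}L_j|\a_j|\Bigr).
\]

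Set $S_k:=\sum_{j\leq k}L_j|\a_j|$. The normalization $\int|\p_{M_k}|^2\,d\s=1$ together with the peak $|\p_{M_k}(\zeta_k)|^2\as e^{2cS_k}$ forces the bulk of the $L^2(d\s)$-mass of $\p_{M_k}$ onto an arc $I_k$ around $\zeta_k$ of $\s$-measure of order $e^{-2cS_k}$ (since the construction keeps $\s'(\zeta_k)$ bounded below). Integrating $|\p_{M_k}|^2\log|\p_{M_k}|\ggg e^{2cS_k}\cdot cS_k$ against this mass then yields $\e_{M_k}\ggg S_k$. Cauchy--Schwarz bounds $S_k^2\leq\bigl(\sum_{j\leq k}L_j\bigr)\bigl(\sum_{j\leq k}L_j|\a_j|^2\bigr)\leq CM_k$, so $S_k\lll\sq{M_k}$, matching the easy estimate $\e_n=\ov(\sq n)$ and furnishing the desired sharpness.

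The main obstacle is that the Szeg\H{o} summability $\sum_kL_k|\a_k|^2<\infty$ prevents exact saturation of Cauchy--Schwarz (which would demand $L_k|\a_k|^2$ constant in $k$). I would choose $|\a_k|\sim 1/(\sq{L_k}\,g(k))$ with $g\nea\infty$ slowly, so that $\sum_k L_k|\a_k|^2\lll\sum_k g(k)^{-2}<\infty$ while $S_k\sim\sq{M_k}/g(k)$ approaches $\sq{M_k}$ at an arbitrarily slow rate. The delicate part is then controlling the error terms -- off-resonant contributions inside each block, interference between blocks whose resonances $\zeta_k$ differ, and the deviation of $\s$ near $\zeta_k$ from its idealized Szeg\H{o}-asymptotic form -- which requires a quantitative variant of the Szeg\H{o} asymptotics for $\p_n$ and is where the bulk of the technical work resides.
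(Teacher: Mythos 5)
The decisive step in your argument --- passing from the pointwise peak $|\phi_{M_k}(\zeta_k)|\simeq e^{cS_k}$ to the lower bound $\e_{M_k}\ggg S_k$ --- does not hold. The normalization $\int_\bt|\phi_{M_k}|^2d\s=1$ only gives an \emph{upper} bound on how much $L^2(d\s)$-mass can sit near the peak; it does not force a constant fraction of the mass to concentrate there, and for the entropy you need precisely $\int_{I_k}|\phi_{M_k}|^2d\s\gtrsim 1$ on the peak arc, not merely $\s(I_k)\sim e^{-2cS_k}$. Worse, your two hypotheses are mutually contradictory: if $\s'\ge c>0$ near $\zeta_k$ and the peak is comparable to $\|\phi_{M_k}\|_\infty$ (as it is in a resonance construction), Bernstein's inequality gives $|\phi_{M_k}|\ge\frac12 e^{cS_k}$ on an arc of length $\simeq 1/M_k$, so $1\ge\int|\phi_{M_k}|^2d\s\gtrsim e^{2cS_k}/M_k$, forcing $S_k\lesssim\log M_k\ll\sqrt{M_k}$. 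In fact, for block measures of Bernstein--Szeg\H{o} type the entropy can be computed: since $\phi_n^*$ is outer, $\int_\bt\log|\phi_n|\,dm=\log\kappa_n\simeq\sum_k|\g_k|^2$, so a purely a.c.\ measure built naively from sparse Schur-parameter blocks has \emph{bounded} entropies along the whole sequence, no matter how large the pointwise peaks are: the measure is automatically exponentially small exactly where the polynomial is exponentially large, and the two effects cancel up to $\|\g\|_2^2$.

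This cancellation is the real difficulty of the problem, and overcoming it requires two ingredients that are absent from your sketch. First, one must decouple the measure from the polynomial at the peak; the paper does this by adding a point mass $\kappa\,\d_1$ at the peak location with $\kappa$ calibrated to the Christoffel--Darboux kernel, $\kappa\simeq 1/K_{N-1}(1,1)$, so that by Geronimus' formula the polynomial value at $1$ only halves while the mass point contributes $\kappa|\pp_N(1)|^2\log|\pp_N(1)|$ to the entropy (an a.c.\ example is then recovered a posteriori by Bernstein--Szeg\H{o} approximation of the already-constructed $\s$, not by a direct a.c.\ construction). Second, even with this device your constant-amplitude window $\g_n=\a_k\bar\zeta_k^{\,n}$ is insufficient: the resulting gain is the ratio $\ga_N$ of \eqref{e401}, and for constant $\g_j=L/\sqrt N$ the denominator $K_{N-1}(1,1)$ contains $\sim\sqrt N/L$ terms comparable to the top one, so $\ga_N\simeq L^2=O(1)$. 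Obtaining growth $\ggg L^4\sqrt N$ is exactly the content of lemma \ref{import}, which needs a graded multiscale profile inside the block (geometrically growing sub-blocks with amplitudes $\beta_j=j^{-2}$, suitably reversed) so that the CD kernel at the peak is dominated by boundedly many terms. Your proposal, as written, would therefore produce bounded entropies and cannot reach $\bar o(\sqrt{M_k})$ without these additional ideas.
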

The symbol $\bar o(\sqrt{M_k})$ means $h(M_k)\,\sqrt{M_k}$ with any
$h:\bn\to\br_+$ satisfying the property $\lim_{n\to\infty} h(n)=0$;
the point being that $h(n)$ can decay arbitrarily slowly. It follows
from the discussion in section \ref{s1} that this result is sharp and
the bound cannot be improved.

A by-product of the proof of this theorem is a result for the growth
of other integrals that measure the size of the polynomials,
 see corollary \ref{c1}. A simple counterpart of  theorem \ref{t1} also holds
 for orthogonal polynomials with respect to a Szeg\H o measure on an interval of
 the real line.

For $A_n, B_n>0$, we write $A_n \simeq B_n$ iff $c_1\le A_n/B_n\le
c_2$ with constants $c_1, c_2>0$. Similarly, $A_n\ggg B_n$ means
$A_n\ge c_1 B_n$. The symbol $\wto$ stands for the weak convergence
of measures.


\section{Preliminaries and Main lemma}\label{s1}
We begin with several simple observations:
\begin{itemize}
\item If $\|\gamma\|_\infty<1/2$, one has
\begin{equation}\label{e2}
\int_\bt \log\s' dm= \sum_k \log (1-|\g_k|^2) \simeq -\sum_k
|\g_k|^2
\end{equation}
where the both sides could be equal to $-\infty$. They are finite iff
$\s\in\sz$ (see \cite{sze}, \cite[p. 136, formula (2.3.1)]{si1}).
\item Let $\kappa_n$ be the leading coefficient of  $\p_n$. It is well-known that
\begin{equation}\label{e21}
\kappa^2_n= \frac1{\prod^{n-1}_{k=0}(1-|\g_k|^2)},
\end{equation}
so $\sup_n \kappa_n<\infty$ iff $\s\in\sz$. Hence, we can study the
entropy of monic polynomials instead, i.e.
\begin{equation}\label{e22}
\hat\e_{n}=\int_\bt |\pp_n|^2\log |\pp_n| d\s, \quad
\hat\e_{n}^+=\int_\bt |\pp_n|^2\log^+ |\pp_n| d\s
\end{equation}
We will do just that, the estimates obtained will imply theorem
\ref{t1}.

\item An upper bound for $\hat\e_n$ is easy to obtain. Recalling the Szeg\H o recurrence
formulas \cite[theorem 1.5.2]{si1} (notice that our $\g_n$ are $-\a_n$ from the book)
\begin{equation}
\left\{
\begin{array}{ccc}
\pp_{n+1}&=&z\pp_n +\bar\g_n \pp_n^*, \quad \pp_0=1, \label{e23}\\
\pp^*_{n+1}&=&\pp^*_n+\g_n z\pp_n, \quad \pp^*_0=1
\end{array}
\right.
\end{equation}
and  $|\pp_n(z)|=|\pp_n^*(z)|, \, z\in \mathbb{T}$ we see that
$$
|\pp_n(z)|\le \prod_{k=0}^{n-1}(1+|\g_k|)
$$
for $z\in \bt$,  and
$$
\log  |\pp_n(z)|\le \sum_{k=0}^{n-1}\log (1+|\g_k|)\le
\sum_{k=0}^{n-1} |\g_k|
$$
Since $\s\in\sz$, $\{\g_k\}\in \ell^2(\mathbb{Z}^+)$, and the latter
sum in the displayed formula is $\ov(\sqrt n)$. Hence
$$
\hat\e_n=\int_\bt |\pp_n|^2\log |\pp_n| d\s\le \lt(\sum_{k=0}^{n-1}
|\g_k|\rt)
 \int_\bt |\pp_n|^2 d\s = \ov(\sqrt n)
$$
\end{itemize}

Now we need to introduce some definitions to be used later in the
text.\smallskip

Let $\mu$ be a probability measure on $\bt$ with Schur parameters
$\{\g_k(\mu)\}$ and corresponding orthogonal polynomials
$\{\p_n(\mu)\}$. Given integers $N'<N$ and arbitrary $\kappa>0$, we
introduce the so-called $(N', N; \kappa)$--transformation of the
measure (or, equivalently, of its Schur parameters). Strictly
speaking, the $(N', N; \kappa)$--transformation depends also on
$\{\g'_k\}_{k=N'+1,\dots, N}$, a ``new interval" of Schur parameters
we want to ``incorporate" into $\{\g_k\}$. However, we will suppress
this dependence to keep the notation reasonably simple.

{\bf Definition of $(N',N;\kappa)$--transformation.} First, consider

\[
d\mu_0[\mu]=\frac{dm}{|\p_{N'+1}(\mu)|^2}
\]
 This measure is the so-called
Bernstein-Szeg\H{o} approximation to $d\mu$. Its Schur coefficients
 $\g_k(\mu_0)$ satisfy (\cite{sze, si1}):
$\g_k(\mu_0)=\g_k(\mu),
 \ k=0,\dots, N'$, $\g_k(\mu_0)=0, \  k> N'$. Secondly,
define the new sequence of Schur parameters by
\begin{equation}\label{e3}
\g_k(\mu_1)=\lt\{
\begin{array}{lll}
\g_k(\mu_0)&,& k=0,\dots, N'\\
\g'_k&,& k=N'+1,\dots N\\
0&,& k>N
\end{array}
\rt.
\end{equation}
This corresponds of course to writing
$d\mu_1[\mu]=1/|\p_{N+1}(\mu_1)|^2\, dm$ and the polynomial
$\p_{N+1}(d\mu_1)$ is determined through Schur parameters by (\ref{e21})
and (\ref{e23}). Next, we let
\begin{equation}\label{e4}
d\s[\mu]=\frac 1{1+\kappa}\lt(d\mu_1+ \kappa\, d\d_1\rt)
\end{equation}
where $\d_1$ is the Dirac's delta measure at $z=1$ on the unit
circle. The measure $\s$  and its  Schur coefficients $\{\g_k(\s)\}$
are  called the $(N', N; \kappa)$-transformation of $\mu$ and its
Schur coefficients $\{\g_k(\mu)\}$, respectively. Notice that the
normalization in (\ref{e4}) guarantees that $\sigma$ is a probability
measure.\bigskip

We define now the functions $\ga, \ps: \br^n_+\to \br_+ $ depending
on $\{\g_k\}_{k=1,\dots, n}$ as
\begin{eqnarray}\label{e401}
\ga_n&=&\ga_n(\{\g_k\})=\frac{\lt(\sum^n_{j=1}\g_j \rt)\
\exp\lt(\sum^n_{j=1}\g_j\rt)}{\sum^n_{j=1} \exp\lt(\sum^j_{k=1}\g_k\rt)}\\
\ps_n&=&\ps_n(\{\g_k\})=\frac{\exp\lt(\sum^n_{j=1}\g_j\rt)}{\sum^n_{j=1}
\exp\lt(\sum^j_{k=1}\g_k\rt)} \nonumber
\end{eqnarray}

\begin{lemma}\label{import} Given any small $L>0$, there exists an increasing sequence
$\{N_k\}\subset \bn$ and
$N_k$--tuples $\{\g_j\}_{j=1,\dots, N_k}, 0<\g_j<1,$ such that
$\sum_{j=1}^{N_k} \g_j^2\simeq L^2$ and
\begin{equation}\label{e402}
\ga_\nk(\{\g_j\})\ggg L^{4}\sqrt\nk, \qquad \ps_\nk(\{\g_j\})
\gtrsim L^{3}
\end{equation}
\end{lemma}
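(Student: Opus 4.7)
The plan is to use a uniform distribution of Schur parameters. For a fixed small $L > 0$, I would exhibit, for every integer $N$ with $1 \le N \le \lfloor c L^{-4}\rfloor$ (where $c > 0$ is a small absolute constant), an $N$-tuple obtained by setting $\gamma_j := L/\sqrt{N}$ for all $j = 1, \ldots, N$. The increasing sequence $\{N_k\}$ is then just the enumeration of these integers. The normalization $\sum_j \gamma_j^2 = L^2$ and the inequality $0 < \gamma_j < 1$ are immediate.

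Setting $\beta := L/\sqrt{N}$, the partial sums $S_j = j\beta$ form an arithmetic progression, so the denominator of $\Psi_N$ is a geometric series and a short computation yields
\[
\Psi_N \;=\; \frac{1 - e^{-\beta}}{1 - e^{-N\beta}}.
\]
I would then estimate this in two regimes. When $N \gtrsim L^{-2}$, we have $L\sqrt{N} \gtrsim 1$, hence $\Psi_N \asymp \beta = L/\sqrt{N}$; combined with $N \le cL^{-4}$ this gives $\Psi_N \gtrsim L^3$. When $N \lesssim L^{-2}$, we have $L\sqrt{N} \lesssim 1$, hence $\Psi_N \asymp 1/N \ge L^2 \gtrsim L^3$ (since $L < 1$). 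For the first bound, $\sum_j \gamma_j = L\sqrt{N}$ gives $\Gamma_N = L\sqrt{N}\cdot\Psi_N$, so in the first regime $\Gamma_N \asymp L^2 \gtrsim L^4\sqrt{N}$ (using $N \le cL^{-4}$), and in the second $\Gamma_N \asymp L/\sqrt{N} \gtrsim L^4\sqrt{N}$ (using the weaker bound $N \le L^{-3}$). Both inequalities in \eqref{e402} follow.

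The essential tension built into the lemma is that the requirement $\Psi_N \gtrsim L^3$ pushes $N$ to be small, while the $\ell^2$-constraint together with the Cauchy--Schwarz bound $\sum_j \gamma_j \le L\sqrt{N}$ imposes the cap $\Gamma_N \le L\sqrt{N}\cdot\Psi_N$, which makes $\Gamma_N \gtrsim L^4\sqrt{N}$ essentially \emph{equivalent} to $\Psi_N \gtrsim L^3$. The uniform choice of Schur parameters is the natural candidate that saturates Cauchy--Schwarz and balances these two constraints, with the borderline range $N \asymp L^{-4}$ being tight. The only real calculation to carry out is the geometric-series asymptotic for $\Psi_N$ through the transition point $L\sqrt{N} \asymp 1$, which is entirely elementary; the main conceptual step is recognizing that the uniform construction already saturates the trade-off.
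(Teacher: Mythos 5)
There is a genuine gap, and it is fatal to the intended use of the lemma. The statement asks for an \emph{increasing sequence} $\{N_k\}\subset\bn$ (hence $N_k\to\infty$) along which, for the \emph{fixed} small $L$, both bounds in \eqref{e402} hold; this is exactly what is needed later, since in lemma \ref{l2} one must take $N\ge 2N'$ with $N'$ arbitrarily large, and in theorem \ref{t1} one needs $M_k\gg L_k^{-8}$ and entropies of size $L_k^4\sqrt{M_k}\to\infty$. Your uniform choice $\g_j=L/\sqrt N$ gives, as your own computation shows, $\ps_N\asymp\min(1/N,\,L/\sqrt N)$, which tends to $0$ as $N\to\infty$; consequently \eqref{e402} holds only for $N\lesssim L^{-4}$. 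That is a finite set of admissible $N$'s, so it does not produce the required infinite sequence, and under the cap $N\lesssim L^{-4}$ one has $L^4\sqrt N\lesssim L^2$, so the resulting entropy bound would be bounded in $N$ rather than of order $\sqrt N$ --- the construction of theorem \ref{t1} collapses.

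The conceptual claim that the uniform distribution ``saturates the trade-off'' and that $N\asymp L^{-4}$ is tight is where the proposal goes wrong. Cauchy--Schwarz does force $\ps_N\gtrsim L^3$ whenever $\ga_N\gtrsim L^4\sqrt N$, but $\ps_N\gtrsim L^3$ does \emph{not} force $N\lesssim L^{-4}$: writing $\ps_N^{-1}=\sum_{j\le N}\exp\bigl(-\sum_{t=j+1}^{N}\g_t\bigr)$, this quantity can be kept $\lesssim L^{-3}$ \emph{uniformly in $N$} while still having $\sum_j\g_j\simeq L\sqrt N$, provided the $\g$'s are arranged non-uniformly so that the tail sums from the end grow quickly. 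This is precisely what the paper does: after reversing the index it uses blocks of geometrically growing length (defined recursively through the exponential of the accumulated sums) with weights $\b_j=j^{-2}$ and a final block carrying the bulk of the $\ell^2$-mass, so that the denominator stays $\lesssim L^{-3}$ and the numerator is $\simeq L\sqrt{N_k}$ for a sequence $N_k\to\infty$; cf.\ also remark \ref{r1}, which records that the optimal $\ga_{N_k}$ is between $L^4\sqrt{N_k}$ and $L\sqrt{N_k}$ for all large $N_k$ --- sharp in $n$, not capped in $n$. So the missing idea is exactly this non-uniform, block-structured choice of the $\g_j$'s; the uniform candidate is the wrong one.
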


\begin{proof}
Rewrite $\ga_n$ as
$$
\ga_n=\ga_n(\{\g_t\})=\frac{\sum^n_{t=1}\g_t}{1+\sum^{n-1}_{m=1}
\exp\lt(-\sum^n_{t=m+1}\g_t\rt)}
$$
and make the change of summation  index $k\to n-k$,
$\hat\gamma_k=\gamma_{n-k}$. We have
\begin{equation}\label{e403}
\ga_n=\ga_n(\{\g_t\})=\frac{\sum^n_{t=1}\hat\g_t}{1+\sum^{n-1}_{m=1}
\exp\lt(-\sum^m_{t=1}\hat\g_t\rt)}
\end{equation}
Define the sequence $\{\nk\}$ by recursion
\[
N_{k+1}=N_k+\left[\frac{1}{k^2} \exp\left( L \sum_{j=1}^k
\frac{\sqrt {N_j-N_{j-1}}}{j^2}\right)\right]
\]
where $N_0=0$ and $[x]$ is the integer part of $x$. Taking
$N_1=CL^{-3}$ with $C$ large enough, one obtains by induction that
\begin{equation}
N_{k+1}-N_k\simeq N_{k+1} \label{one}
\end{equation}
for any $k$. Then, for each $k$, we choose the following $\{\hat\gamma_j\}$
\begin{equation}\label{e404}
\hat\gamma_t=\frac{L\, \beta_j}{\sqrt{N_j-N_{j-1}}},\quad t\in
(N_{j-1},N_j],
\end{equation}
$j=1,\dots, k$.  Furthermore, $\beta_j=j^{-2}, j=1,\dots, k-1$, and
$\beta_k=1$. We have
\[
\sum_{t=1}^{N_k} {\hat\gamma}_t^2\simeq L^2 \lt(\sum_{j=1}^{k-1}
\frac1{j^4}+1\rt) \simeq L^2
\]
and hence the first condition on the $N_k$--tuple $\{\g_j\}$ is
satisfied. Let us compute \eqref{e403} now. For the numerator,
\begin{equation}\label{l1n}
\sum_{t=1}^{N_k} \hat\gamma_t\simeq L \lt(\sum_{j=1}^{k-1}
\frac{\sqrt{N_j-N_{j-1}}}{j^2} +\sqrt{N_k-N_{k-1}}\rt) \simeq L
\sqrt{N_k}
\end{equation}
due to (\ref{one}).

 Next, estimating the denominator in (\ref{e403}), we have
\begin{eqnarray*}
&&\sum\limits_{m=1}^{N_k} \exp \left(-\sum\limits_{t=1}^m
\hat\gamma_t\right)\lesssim N_1+\sum_{j=2}^k (N_j-N_{j-1})
\exp\left(-\sum_{l=1}^{j-1}
(N_l-N_{l-1})\hat\gamma_l\right)\\
&&=N_1+\sum_{j=2}^k (N_j-N_{j-1}) \exp\left(- L \sum_{l=1}^{j-1}
\frac{\sqrt{N_l-N_{l-1}}}{l^2}\right)\\ &&\lesssim
L^{-3}+\sum_{j=2}^k \frac{1}{(j-1)^2}\lesssim L^{-3}
\end{eqnarray*}
by the definition of $\{N_k\}$. Combining the previous two bounds,
we get \eqref{e402}.
\end{proof}
The estimates obtained are not sharp in $L\sim 0$ at all.
However, they are sharp in $n$ and this is all we need.

\begin{remark}\label{r0} The estimate (\ref{l1n}) yields
\begin{equation}\label{e405}
\sum^{N_k}_{j=1} \exp\lt(\sum^j_{l=1}\g_l\rt) \ge
\exp\lt(\sum^{N_k}_{l=1}\g_l\rt) \ge \exp(CL\sqrt{N_k})
\end{equation}

\end{remark}

\begin{remark}\label{r1}
Let $\ss_L=\{\{\g_j\}_{j=1,\dots, \nk}: \sum_{j=1}^{\nk}
\g^2_j=L^2\}$
with a small $L>0$.Then,
$$
\max_{\{\g_j\}\subset \ss_L} \ga_\nk(\{\g_j\})\ggg L^{4}\sqrt \nk
$$
The bound
$$
\max_{\{\g_j\}\subset \ss_L} \ga_\nk(\{\g_j\})\lll L\sqrt \nk
$$
trivially follows from the definition and the Cauchy-Schwarz
inequality.
\end{remark}
\begin{remark}\label{r2}
The reasoning of the above lemma can be adapted to handle any
sufficiently large $n$ and not necessarily constructed as a sequence
$\{\nk\}$.
\end{remark}

 Recalling the definition of the
$(N',N;\kappa)$--transformation, we have the following key lemma.

\begin{lemma}\label{l2} Let $\mu\in \sz$ be a probability measure on
$\bt$ with real Schur parameters. For any natural $N'$, small
positive $L$, and $\d>0$, there is $\s$, a $(N',N;
\kappa)$--transformation of
 $\mu$ such that:
\begin{enumerate}
\item $N\ge 2 N'$,
\item $\sum^{N}_{k=N'+1} |\g'_k|^2\lll L^2$,
\item $0<\kappa<\d$,
\item Finally,
$$
\hat\e_{N}(\s)=\int_\bt |\pp_{N}(\s)|^2\log |\pp_{N}(\s)| d\s \ggg
L^4\sqrt{N}
$$
\end{enumerate}
\end{lemma}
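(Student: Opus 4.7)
The plan is to obtain the entropy lower bound entirely from the Dirac mass, with $\kappa$ tuned to an exponentially small scale dictated by the reproducing kernel of $\mu_1$. First, pick $n$ large enough (to be specified below), set $N=N'+n$ — so $N\ge 2N'$ as soon as $n\ge N'$ — and apply Lemma \ref{import} (adapted via Remark \ref{r2} to this particular $n$) to produce real positive Schur parameters $\g'_{N'+j}=\g_j$, $j=1,\dots,n$, with $\sum_{k=N'+1}^N(\g'_k)^2\simeq L^2$ and $\ps_n\gtrsim L^3$. Form $\mu_1$ and $\s=(\mu_1+\kappa\,\d_1)/(1+\kappa)$ as in the definition of the $(N',N;\kappa)$--transformation.

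The key analytical tool is the classical Uvarov-type identity for a single-point perturbation. Writing $\pp_N(\s)-\pp_N(\mu_1)\in\cp_{N-1}$ (both polynomials are monic of degree $N$) and enforcing orthogonality against $\cp_{N-1}$ with respect to $\mu_1+\kappa\,\d_1$ (which has the same monic orthogonal polynomials as $\s$), the correction must be a scalar multiple of the reproducing kernel $K_{N-1}(\mu_1)(\cdot,1)$. Solving for the scalar and evaluating at $z=1$ gives
\[
\pp_N(\s)(1)\;=\;\frac{\pp_N(\mu_1)(1)}{1+\kappa\,K_{N-1}(\mu_1)(1,1)}.
\]
Set $A=|\pp_N(\mu_1)(1)|$ and $B=K_{N-1}(\mu_1)(1,1)$. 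Since $\mu_1$ has only real Schur parameters, the Szeg\H{o} recursion at $z=1$ collapses to $\pp_j(\mu_1)(1)=\prod_{k=0}^{j-1}(1+\g_k(\mu_1))>0$, non-decreasing for $j>N'$. Splitting the product at $k=N'$ and using $\log(1+x)=x+O(x^2)$ on the tail yields $\log A=C_\mu+\sum_{j=1}^{n-1}\g_j+O(L^2)$ with $C_\mu$ a finite constant depending only on $\mu$ and $N'$. Combined with \eqref{l1n}, this gives $\log A\gtrsim L\sqrt n$ for $n$ large enough to dominate $C_\mu$.

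Because $\mu_1\in\sz$, the orthonormal leading coefficients are uniformly bounded, hence $B\simeq\sum_{k<N}|\pp_k(\mu_1)(1)|^2$. The monotonicity $|\pp_k(\mu_1)(1)|^2\le A\cdot|\pp_k(\mu_1)(1)|$ gives $B\lesssim A\cdot\sum_k|\pp_k(\mu_1)(1)|$, and matching the latter sum to the denominator of $\ps_n$ via $\prod(1+\g_k)\leftrightarrow\exp(\sum\g_k)$ yields
\[
\frac{A^2}{B}\;\gtrsim\;\ps_n\;\gtrsim\;L^3.
\]
Split $\hat{\e}_N(\s)$ into its absolutely continuous and Dirac components. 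The pointwise bound $x^2\log^- x\le 1/(2e)$ together with $\mu_1(\bt)=1$ ensures that the a.c.\ piece is at least $-1/(2e)$. Choose $\kappa=B^{-1}$: then $1+\kappa B=2$, $\kappa<\d$ for $n$ large (as $B$ is exponentially large in $\sqrt n$), and the Dirac piece equals
\[
\frac{\kappa}{1+\kappa}\cdot\frac{A^2}{4}\lp\log A-\log 2\rp\;\simeq\;\frac{A^2}{B}\log A\;\gtrsim\;L^3\cdot L\sqrt n\;\simeq\;L^4\sqrt N,
\]
which dominates the bounded a.c.\ contribution.

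The main obstacle is the passage from the combinatorial bound $\ps_n\gtrsim L^3$ of Lemma \ref{import} to the analytic bound $A^2/B\gtrsim L^3$. Linearization errors in $\log(1+\g)=\g+O(\g^2)$ and in $\prod(1+\g_k)\approx\exp(\sum\g_k)$ accumulate to $O(L^2)$; this is absorbed because our lower bounds on $\ps_n$ are polynomial in $L$. The fixed contribution of the $\mu$-Schur parameters for $k\le N'$ enters only as a bounded multiplicative constant $C_\mu$, absorbed by taking $n$ sufficiently large compared to a quantity depending on $\mu$, $N'$, $L$ and $\d$.
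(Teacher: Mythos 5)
Your proof is correct and follows essentially the same route as the paper: you insert the block of parameters from Lemma \ref{import}, choose $\kappa=1/K_{N-1}(\mu_1)(1,1)$, use the Geronimus/Uvarov one-point-mass formula to get $\pp_N(\s)(1)=\pp_N(\mu_1)(1)/2$, and bound the entropy from below by the Dirac contribution (the a.c.\ part being bounded below by $-1/(2e)$), exactly as in \eqref{e81}. The only cosmetic differences are that you derive the Uvarov identity instead of citing it, skip the halving of the new parameters, and use the factorization $\ps_n\cdot\log A$ in place of the paper's direct appeal to the bound on $\ga_n$, which is the same estimate from Lemma \ref{import}.
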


\begin{proof} We start the proof with some simple observations. Let,
as above, $\mu_0=\mu_0[\mu]$, $\mu_1=\mu_1[\mu]$ and  $\s=\s[\mu]$.
First, assuming that such a transformation exists  and using
\eqref{e2}, we see that
$$
\int_\bt \log \mu'_0 dm= \sum_{k=0}^{N'} \log(1-|\g_k(\mu)|^2)
$$
and
\begin{eqnarray}
\sum_{k=0}^\infty \log(1-|\gamma_k(\sigma)|^2)= \int_\bt \log \s' dm=\int_\bt \log \mu'_1 dm-\log(\kappa+1)=\nonumber
\\
\sum^{N'}_{k=0} \log(1-|\g_k(\mu)|^2) +
\sum^{N}_{k=N'+1}\log(1-|\g'_k|^2)-\log(1+\kappa)\label{e51}
\end{eqnarray}
This estimate controls the  growth of the $\ell^2$--norm of Schur
coefficients
 under our transformation.

The right choice for the index $N$  will be made below; from now on,
we assume that it satisfies (1).  We define the Schur coefficients
$\{\g'_k\}_{k=N'+1,\dots, N}$  as
$$
\g'_{N+1-t}=\frac 12\, \hat\g_t, \quad t=1, \dots, N-N'
$$
where $\{\hat\g_t\}_{t=1,\dots, N-N'}$ comes from \eqref{e404}.
Notice that $\g'_k>0$ and  the sequence  $\{\g'_k\}$ satisfies (2).

Introduce the  Christoffel-Darboux kernel
\begin{equation}\label{e6}
K_n(\mu_1)(z,w)=\sum_{k=0}^n \p_k(\mu_1)(z)\ovl{\p_k(\mu_1)(w)}
\end{equation}
We define $\kappa$ as
\begin{equation}\label{e7}
\kappa=\frac 1{K_{N-1}(\mu_1)(1,1)}
\end{equation}
and we need a bound from below for $K_{N}(\mu_1)(1,1)$. Notice that
all Schur coefficients are real so $\pp_j(\mu_1)(1)$ are real and
$\pp_j(\mu_1)(1)=\pp_j^*(\mu_1)(1)$. For brevity, take
$A=|\pp_{N'+1}(\mu_1)(1)|$. All zeroes of $\Phi_j(\mu_1)(z)$ are
inside $\mathbb{D}$ so $A> 0$. Then, by Szeg\H o recurrence
relations \eqref{e23}, one has
$$
|\pp_{m}(\mu_1)(1)|=A\cdot \left|\prod^{m-1}_{k=N'+1}
(1+\g'_k)\right|, \quad m>N'+1
$$
Hence,
\begin{eqnarray*}
K_{N-1}(\mu_1)(1,1)&=&\sum_{k=0}^{N-1} |\p_k(\mu_1)(1)|^2\gtrsim
\sum^{N-1}_{k=N'+2}
|\pp_k(\mu_1)(1)|^2 \\
&=& A^2 \sum^{N-1}_{k=N'+2} \prod^{k-1}_{j=N'+1}(1+\g'_j)^2\simeq  A^2
 \sum^{N-1}_{l=N'+2} \exp \lt(2\sum^{l-1}_{j=N'+1}\g'_j\rt)
\end{eqnarray*}
By remark \ref{r0},  the latter quantity goes to infinity through a constructed subsequence in $N$, so,
recalling  \eqref{e7}, we obtain (3) for $N$ large enough.

 To start with (4), recall the following formula usually attributed to Geronimus
 (see, e.g., \cite[p. 253]{ra1} or \cite[p. 38, (3.30)]{gero};
 this very formula was used by Rakhmanov in his paper on the Steklov's
 conjecture~\cite{ra1})
 $$
 \pp_{N}(\s)(z)=\pp_{N}(\mu_1)(z)- \frac{\kappa \pp_{N}(\mu_1)(1)}
 {1+\kappa K_{N-1}(1,1)}\, K_{N-1}(\mu_1)(z,1)
 $$
 Consequently,
 $$
 \pp_{N}(\s)(1)= \pp_{N}(\mu_1)(1)/2
 $$
 and
 \begin{eqnarray}
 \hat\e_{N}(\s)&=&\int_\bt   |\pp_{N}(\s)|^2 \log |\pp_{N}(\s)| d\s  \nonumber\\
 &\ge&   \kappa|\pp_{N}(\s)(1)|^2 \log |\pp_{N}(\s)(1)|-C \,  \nonumber\\
 &\gtrsim& \frac{A^2 \prod^{N-1}_{k=N'+1} (1+\g'_k)^2\cdot \log\lt(A \prod^{N-1}_{k=N'+1}
 (1+\g'_k)/2\rt)}{\sum_{k=0}^{N'} |\pp_k(\mu_1)(1)|^2+ \sum^{N-1}_{k=N'+1} A^2 \prod^k_{j=N'+1}
 (1+\g'_j)^2} \nonumber\\
 &\gtrsim &  \frac{A^2\exp(\sum^{N}_{k=N'+1} 2\g'_k)\lt(\log A -\log 2+\sum^{N}_{k=N'+1} 2\g'_k\rt)}
 {\sum_{k=0}^{N'} |\pp_k(\mu_1)(1)|^2+A^2 \sum^{N-1}_{k=N'+1}\exp(\sum^k_{j=N'+1} 2\g'_j)}
  \nonumber\\
 && \label{e81}
\end{eqnarray}
Recalling \eqref{e402}, we continue as
$$
... \ggg L^4 \sqrt{N-N'}\ggg L^4\sqrt N
$$
whenever $N$ is large enough and belongs to the subsequence from
lemma~\ref{import}.
\end{proof}

\begin{remark}\label{reser}
The above lemma along with remark \ref{r2} imply the sharp
bound
\[
\sup_{\sigma:\,\{\|\gamma\|_2<1/2\}} \epsilon_n(\sigma)\simeq \sqrt{n}
\]
In our construction, the measure $\sigma$ yielding the lower bound
contained a jump. However, taking the Bernstein-Szeg\H{o}
approximations $\sigma_j$ to $\sigma$, we obtain
\[
\epsilon_n(\sigma_j)=\int_\mathbb{T} |\phi_n(\sigma_j)|^2\log
|\phi_n(\sigma_j)|d\sigma_j=\int_\mathbb{T} |\phi_n(\sigma)|^2\log
|\phi_n(\sigma)|d\sigma_j\to
\]
\[
\int_\mathbb{T} |\phi_n(\sigma)|^2\log |\phi_n(\sigma)|d\sigma
\]
since $\sigma_j\wto \sigma$. Thus, we have
\[
\sup_{\sigma:\,\{\|\gamma\|_2<1/2, \,\sigma_{s}=0\}}
\epsilon_n(\sigma)\simeq \sqrt{n}
\]
where $\sigma_s$ is the singular component of the measure $\sigma$.

\end{remark}

\section{Proof of  theorem \ref{t1} and some corollaries}\label{s2}

\nt{\it Proof of theorem \ref{t1}.}\quad Let $\delta_k>0$, $L_k>0$,
$\sum_{k=1}^\infty \delta_k<\infty$,
and $\sum_{k=1}^\infty L^2_k<\infty$. Assume that the both sums are small.
 The construction will recursively use lemma \ref{l2} from the previous section.
 We will construct the sequence of probability measure $\sigma_j$ by
 applying the $(M',M;\kappa)$--transformation consecutively
 (properly choosing parameters $M',M,\kappa$  at every step)
 and then will take
  the weak limit of $\{\sigma_j\}$. The measure $\sigma$ obtained
  in this way will have the necessary properties.

{\bf First step: $k=1$.} Let $d\mu^0=d\s^0=dm$, the Lebesgue
measure on $\bt$. Take $M'_1=1$; then, by lemma \ref{l2}, there is a
$(M'_1,M_1; \kappa_1)$--transformation of $\s^0$ which is denoted by
$\s^1$; it depends on the sequence of Schur parameters
$\{\g'_{1k}\}_{k=M'_1+1,\dots, M_1}$. We can arrange $M_1\ge 2^1$
and $M_1\gg 1/L^8_1$.  Also,
$$
\sum_{k=M'_1+1}^{M_1} |\g'_{1k}|^2\le L^2_1
$$
and, again by the same lemma
$$
\hat\e_{M_1}(\s^1)=\int_\bt |\pp_{M_1}(\s^1)|^2\log
|\pp_{M_1}(\s^1)| d\s^1 \ggg L_1^4\sqrt{M_1}
$$

Notice that for the Schur parameters of $\sigma^1$ we have
\begin{equation}\label{st1}
\sum_{l=0}^\infty \log(1-|\gamma_l(\sigma^1)|^2)=
\sum_{l=M_1'+1}^{M_1} \log(1-|\gamma_{1l}'|^2)-\log(1+\kappa_1)
\end{equation}
due to (\ref{e51}). We can choose $M_1$ such that
$\kappa_1<\delta_1$.

 \emph {For each measure $\sigma^k$ we construct later, introduce
\[T_{M_j}(\s^k)=|\pp_{M_j}(\s^k)|^2\log |\pp_{M_j}(\s^k)|, \ j\le k\]
This function is continuous on $\mathbb{T}$ since $\pp_l(\s^k)$ has all its zeroes
inside $\mathbb{D}$.}

{\bf Second step: $k=2$.}  Consider $T_{M_1}(\s^1)$; this is a
continuous function and hence there is a trigonometric polynomial
$f_1$ such that
\begin{equation}
||T_{M_1}(\s^1)-f_1||_\infty <\ve' \label{approx}
\end{equation}
 for any fixed $\ve'>0$. Let
$M'_2=\max\{\deg f_1, M_1\}$. Define $\s^2$ as $(M'_2, M_2;
\kappa_2)$--transformation of $\s^1$. Once again, we choose $M_2\ge
2^2$, $M_2\gg 1/L^8_2$ and
\[
\sum_{k=M'_2+1}^{M_2} |\g'_{2k}|^2\le L^2_2
\]
By the construction,

\begin{equation}\label{refe1}
\sum_{l=0}^\infty \log(1-|\gamma_l(\sigma^2)|^2)=\sum_{l=0}^{M_2'} \log(1-|\gamma_{l}(\sigma^1)|^2)+
\sum_{l=M_2'+1}^{M_2} \log(1-|\gamma_{2l}'|^2)-\log(1+\kappa_2)
\end{equation}
and, again, we take $\kappa_2<\delta_2$ to have $\|\gamma(\sigma^2)\|_2$ under control.

By lemma \ref{l2}
$$
\hat{\e}_{M_2}(\s^2)=\int_\bt |\pp_{M_2}(\s^2)|^2\log
|\pp_{M_2}(\s^2)| d\s^2 \ggg L_2^4\sqrt{M_2}
$$

Now, we could continue to apply the same procedure to generate
measures $\sigma_k$ with

\begin{equation}
\hat{\e}_{M_k}(\s^k)=\int_\bt |\pp_{M_k}(\s^k)|^2\log
|\pp_{M_k}(\s^k)| d\s^k \ggg
L_k^4\sqrt{M_k}=\overline{o}(\sqrt{M_k}) \label{more}
\end{equation}
where $\overline{o}(\sqrt{M_k})$ decays to zero arbitrarily slower
than $M_k$ simply because $L_k$ is fixed and $M_k$ can be chosen
large enough to accommodate arbitrarily slow decay. However, we  want more
than (\ref{more}): we want every measure $\sigma^k$ to have all of
the entropies $\{\hat{\epsilon}_{M_1}(\sigma^k), \ldots,
\hat{\epsilon}_{M_{k-1}}(\sigma^k)\}$ large. That, as we will see
next, can also be achieved by the choice of large $M_k$.

Let us handle the case $k=2$ first. We need to make
$\hat\epsilon_{M_1}(\sigma^2)$ large.  Recall the definition of
$(N',N;\kappa)$--transform. Set $\mu^2=\mu_0[\s^1]$ and
$\s^2=\s[\s^1]$. We have $\g_l(\s^1)=\g_l(\mu^2),
c_l(\s^1)=c_l(\mu^2)$ for $l\le M_2'$ where $c_l(\cdot)$ are the
moments of the measures. Therefore,
\[
\pp_{M_1}(\s^1)=\pp_{M_1}(\mu^2)\] \[T_{M_1}(\s^1)=T_{M_1}(\mu^2)\]
 Since $f_1$ is a trigonometric polynomial
of degree smaller than $M_2'$, we have

\[
\int_\mathbb{T} T_{M_1}(\sigma^1)d\sigma^1-\epsilon'\leq
\int_\mathbb{T} f_1d\sigma^1=\int_\mathbb{T} f_1d\mu^2\leq
\int_\mathbb{T} T_{M_1}(\mu^2)d\mu^2+\epsilon'
\]
by (\ref{approx}). Therefore,
\[
\int_\mathbb{T} T_{M_1}(\sigma^1)d\sigma^1-2\epsilon'\leq
\int_\mathbb{T} T_{M_1}(\mu^2)d\mu^2
\]
Now, notice that
\[
c_j(\sigma^2)\to c_j(\mu^2), \quad j\leq M_2'
\]
as $\kappa_2\to 0$ and this convergence is uniform in the choice of
$M_2$ and $\{\gamma_j(\sigma^2), j>M_2'\}$. Indeed, it follows from the representation
\[
c_j(\sigma^2)=\frac{c_j(\mu^2)+\kappa_2}{1+\kappa_2}, \quad j\leq M_2'
\]

Therefore
\[
\pp_{M_1}(\sigma^2)\to\pp_{M_1}(\mu^2)
\]
and, recalling that $\pp_{M_1}(\mu^2)$ has no zeroes on
$\mathbb{T}$,
\[
T_{M_1}(\sigma^2)=|\pp_{M_1}(\sigma^2)|^2\log
|\pp_{M_1}(\sigma^2)|\to |\pp_{M_1}(\mu^2)|^2\log
|\pp_{M_1}(\mu^2)|=T_{M_1}(\mu^2)
\]
uniformly on $\mathbb{T}$.
Thus,
\[
\int_\mathbb{T} T_{M_1} (\sigma^2)d\sigma^2\to \int_\mathbb{T}
T_{M_1}(\mu^2)d\s^2
\]
and
\[
\int_{\mathbb{T}} f_1d\sigma^2\to \int_{\mathbb{T}} f_1d\mu^2,\quad {\rm as}\quad \kappa_2\to 0
\]
again, uniformly in the choice of $M_2$ and $\{\gamma_j(\sigma^2),
j>M_2'\}$.
On the other hand, we always have
\[
\Bigl|\int_{\mathbb{T}} f_1d\mu^2-\int_{\mathbb{T}} T_{M_1}(\mu^2)d\mu^2\Bigr|\leq \epsilon'
\]
\[
\Bigl|\int_{\mathbb{T}} f_1d\sigma^2-\int_{\mathbb{T}} T_{M_1}(\mu^2)d\sigma^2\Bigr|\leq \epsilon'
\]
Thus, we only need to make sure that $\kappa_2$ is small enough to
guarantee
\[
\int_\mathbb{T} T_{M_1} (\sigma^2)d\sigma^2> \int_\mathbb{T}
T_{M_1}(\mu^2)d\mu^2-3\epsilon'
\]
and then
\[
\int_\mathbb{T} T_{M_1} (\sigma^2)d\sigma^2>\int_\mathbb{T}
T_{M_1}(\sigma^1)d\sigma^1-5\epsilon'>L_1^4\sqrt{M_1}-5\epsilon'
\]

{\bf $k$--th step.} Similarly, we construct the measure $\sigma^k$
such that (\ref{more}) holds along with
\begin{equation}
\int_\mathbb{T} T_{M_j} (\sigma^k)d\sigma^k>\int_\mathbb{T}
T_{M_j}(\sigma^j)d\sigma^j-5\epsilon'>L_j^4\sqrt{M_j}-5\epsilon', \,
j<k \label{b11}
\end{equation}
Moreover, we have
\begin{equation}
\sum_{l=0}^\infty \log(1-|\g_l(\s^k)|^2)\gtrsim -\sum_{j=1}^k L_j^2 -\sum_{j=1}^k \delta_j
\end{equation}
by induction (check, e.g., (\ref{st1}) and (\ref{refe1})).
From the construction it is clear that $\{\s^k\}$ converges weakly
to some $\sigma$. Indeed, at each step we have a recursion
\[
c_p(\sigma^{k+1})=\frac{c_p(\sigma^k)+\kappa_{k+1}}{1+\kappa_{k+1}}
\]
where $p$ is fixed. Since $\kappa_k<\delta_k$ and $\|\delta\|_1\ll 1$, we have convergence of $c_p(\sigma^k)$ by Cauchy criterion.
That is equivalent to $\sigma^k\wto \sigma$.

Repeating the arguments given above and using
(\ref{b11}), we obtain
\begin{eqnarray*}
\int_\bt |\pp_{M_j}(\s)|^2 \log |\pp_{M_j}(\s)| d\s&=&\lim_{k\to\infty} \int_\bt
|\pp_{M_j}(\s^k)|^2 \log |\pp_{M_j}(\s^k)| d\s^k\\
&\ge& \e_{M_j}(\s^j)-5\epsilon'\ggg
L_j^4\sqrt{M_j}=\overline{o}(\sqrt{M_j})
\end{eqnarray*}
for any fixed $j$. For the $\ell^2$--norm of the Schur parameters,
we have
\begin{equation}
\sum_{l=0}^\infty |\g_l(\s^k)|^2\lesssim \sum_{k=1}^\infty
L_k^2+\sum_{k=1}^\infty \delta_k
\end{equation}
The theorem is proved.
 \hfill $\Box$

One can obtain the following striking generalization.  Let $F:\br_+\to \br_+$ be
an increasing continuous function such that $\lim_{x\to\infty}
F(x)/x=\infty$. The
proof of the next statement repeats the arguments of the previous proof word for
word.

\begin{Corollary}\label{c1} For any gauge $F$, there is  $\s\in\sz$ such that
$$
\e_{M_k,F}(\s)=\int_\bt F(|\pp_{M_k}(\s)|^2) d\s\to\infty
$$
for some $M_k\to\infty$.
\end{Corollary}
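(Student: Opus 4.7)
The plan is to mimic the proof of theorem \ref{t1} with the functional $|\pp_N|^2\log|\pp_N|$ replaced throughout by $F(|\pp_N|^2)$. The only substantive new ingredient is an analog of lemma \ref{l2}: given $\mu\in\sz$ with real Schur parameters, $N'\in\bn$, small $L>0$, and $\delta>0$, one must produce an $(N',N;\kappa)$-transformation $\s$ of $\mu$ meeting conditions (1)--(3) of lemma \ref{l2} and with $\int_\bt F(|\pp_N(\s)|^2)\,d\s$ arbitrarily large (in place of the $\ggg L^4\sqrt N$ bound in (4)).

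To establish this analog I would run the proof of lemma \ref{l2} verbatim up through Geronimus' formula, using the same $\{\g'_k\}$ coming from lemma \ref{import}. This yields $|\pp_N(\s)(1)|=(A/2)\prod_{k=N'+1}^{N-1}(1+\g'_k)$ together with the value of $\kappa$ from \eqref{e7}. Keeping only the contribution of the mass point at $z=1$ in the integral gives
\[
\int_\bt F(|\pp_N(\s)|^2)\, d\s \;\ggg\; \kappa\, F\bigl(|\pp_N(\s)(1)|^2\bigr) = \bigl[\kappa\,|\pp_N(\s)(1)|^2\bigr]\cdot g\bigl(|\pp_N(\s)(1)|^2\bigr),
\]
with $g(x):=F(x)/x\to\infty$ by hypothesis. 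The computation leading to \eqref{e81} shows that the bracketed factor is exactly $\ps_{N-N'}$ evaluated at $\{2\g'_k\}$, which is the same as $\ps_{N-N'}(\hat\g)$ from lemma \ref{import}; hence it is bounded below by $L^3$. On the other hand $|\pp_N(\s)(1)|^2\ggg A^2\exp(CL\sqrt{N-N'})$ tends to infinity through the subsequence from lemma \ref{import}, so $g(|\pp_N(\s)(1)|^2)\to\infty$ and the whole right-hand side diverges.

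With this analog of lemma \ref{l2} in hand, the iterative construction of theorem \ref{t1} carries over without modification. One fixes an arbitrary target sequence $V_k\to\infty$ and, at step $k$, chooses $M_k$ large enough along the subsequence so that $\int_\bt F(|\pp_{M_k}(\s^k)|^2)\,d\s^k>V_k$. The trigonometric-polynomial approximation of $F(|\pp_{M_j}|^2)$ (which is continuous on $\bt$ since $F$ is continuous and $\pp_{M_j}$ has no zeros on the circle), the continuity of moments as $\kappa_k\to 0$, the telescoping control of $\sum\log(1-|\g_l|^2)$, and the weak-limit passage $\s^k\wto\s$ all function unchanged. The main obstacle is really contained in the decomposition $\kappa F(|\pp_N(\s)(1)|^2)=[\kappa|\pp_N(\s)(1)|^2]\cdot g(|\pp_N(\s)(1)|^2)$: lemma \ref{import} provides a positive lower bound on the first factor while the superlinearity of $F$ sends the second to infinity, and nothing else in the original argument is sensitive to the particular choice of $F$.
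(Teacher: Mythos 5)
Your proposal is correct and is essentially the paper's own argument: the authors prove Corollary \ref{c1} by asserting that the proof of Theorem \ref{t1} repeats word for word, and your factorization $\kappa F(|\pp_N(\s)(1)|^2)=\bigl[\kappa|\pp_N(\s)(1)|^2\bigr]\,g(|\pp_N(\s)(1)|^2)$ is exactly the intended mechanism, with the first factor controlled by the $\ps_{N_k}\gtrsim L^3$ estimate of Lemma \ref{import} (which is otherwise unused in the proof of Theorem \ref{t1}) and the second diverging because $|\pp_N(\s)(1)|\to\infty$ along the subsequence and $F(x)/x\to\infty$. The only cosmetic imprecision is calling the bracketed factor ``exactly'' $\ps_{N-N'}$ (there is the harmless additive constant $\sum_{k\le N'}|\pp_k(\mu_1)(1)|^2$ in the denominator, negligible for large $N$, and $\ps$ must be evaluated in the same order as in Lemma \ref{import}, which your $\{2\g'_k\}$ indeed respects), so the argument stands as written.
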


As one can expect,  theorem \ref{t1} can be transferred to the
polynomials on an interval of the real line. We say that $\ti\s$ is a
Szeg\H o measure on $[-1,1]$, if it has an arbitrary  singular part
and
$$
\int^1_{-1} \frac{\log \ti\s'}{\sqrt{1-x^2}}\, dx>-\infty
$$
The orthonormal polynomials with respect to the measure $\ti\s$ are denoted $p_n=p_n(\ti\s)$.

\begin{Corollary}\label{c2}  There is a Szeg\H o measure $\ti\s$  on $[-1,1]$ and a
subsequence $\{M_k\}$ such that
$$
\e_{M_k}=\int^1_{-1} |p_{M_k}(\ti\s)|^2\log |p_{M_k}(\ti\s)| d\ti\s=\bar o(\sqrt{M_k})
$$
as $k\to\infty$.
\end{Corollary}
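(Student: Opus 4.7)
The plan is to lift the measure $\sigma\in\sz$ constructed in Theorem \ref{t1} to the interval $[-1,1]$ via the classical Szeg\H o correspondence between OPUC with real Schur parameters and OPRL. First I would observe that the measure produced by the construction in Section \ref{s2} is invariant under complex conjugation: the Schur parameters chosen in Lemma \ref{l2} are real (in fact nonnegative, see \eqref{e404} and the assignment $\g'_{N+1-t}=\hat\g_t/2$), every added point mass sits at the real point $z=1$, and the $(N',N;\kappa)$--transformation preserves the class of symmetric measures. Therefore, the push-forward $\ti\s=\pi_*\s$ under $\pi(z)=\Re z=(z+z^{-1})/2$ is a probability measure on $[-1,1]$, and the Szeg\H o condition on $\bt$ is well-known to be equivalent to the Szeg\H o condition on the interval for such measures.

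Next, I would invoke the standard OPUC/OPRL identities (see Simon \cite[Ch.~13]{si1}): for symmetric $\s$, the orthonormal polynomials $p_n(\ti\s)$ are related to $\p_{2n}(\s)$ through
\[
p_n(\cos\th;\ti\s)=c_n\bigl(\p_{2n}(e^{i\th};\s)+\p_{2n}^*(e^{i\th};\s)\bigr),
\]
with normalizing constants $c_n$ that stay bounded above and away from $0$ whenever $\s\in\sz$ (this boundedness comes from \eqref{e21} applied to $\g_{2n-1}(\s)$, which is small). Using $|\p_{2n}|=|\p_{2n}^*|$ on $\bt$ and the change-of-variables formula $\int_{-1}^1 g\,d\ti\s=\int_\bt g(\pi(z))\,d\s(z)$, the entropy integral on the interval can be rewritten as an integral on $\bt$ that differs from $\hat\e_{2n}(\s)$ only by bounded multiplicative and additive constants coming from the normalization and from the log of the sum inside the integral. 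Applying Theorem \ref{t1} at the even indices $2M_k$ and relabeling then yields the desired $\bar o(\sqrt{M_k})$ lower bound for $\ti\s$.

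The main technical point to be careful with is the pointwise behaviour of $\p_{2n}(\s)+\p_{2n}^*(\s)$ at $z=1$, which is precisely where the dominant contribution to $\hat\e_{2M_k}(\s)$ is concentrated (through the jump $\kappa\d_1$ introduced by the transformation). Since all Schur parameters are real, $\p_{2n}(1)$ is real and equals $\p_{2n}^*(1)$, so the sum doubles rather than cancels, preserving the order of magnitude. The contribution of the jump at $z=1$ in $\s$ becomes the contribution of a jump at $x=1\in[-1,1]$ in $\ti\s$, and the chain of estimates from \eqref{e81} goes through unchanged. The remaining bookkeeping of constants in the Szeg\H o correspondence is routine.
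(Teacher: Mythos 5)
Your proposal is correct and follows essentially the same route as the paper's sketch: symmetry of $\s$ (real Schur parameters, mass at $z=1$), the map $x=\cos\th$ with the classical Szeg\H o formula relating $p_n(\ti\s)$ to $\p_{2n}(\s)+\p^*_{2n}(\s)$ with controlled constants in the Szeg\H o class, evenness of the indices $M_k$, and the key observation that at $z=1$ real-valuedness prevents cancellation, so the jump at $x=1$ reproduces the lower bound of \eqref{e81}. The only caveat is that your intermediate claim that the interval entropy differs from $\hat\e_{2n}(\s)$ by bounded multiplicative and additive constants globally is not literally true (away from $z=1$ the sum $\p_{2n}+\p^*_{2n}$ can cancel), but it is also not needed: as you note in your final paragraph, and exactly as in the paper, the single-point contribution at $x=1$ together with the uniform bound on the negative part of the integrand already gives the required $\bar o(\sqrt{M_k})$ growth.
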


\nt{\it Sketch of the proof.} \ Notice that the measure $\s$ from
theorem \ref{t1} is symmetric on $\bt$. Consequently, its Schur
coefficients as well as coefficients of corresponding orthonormal
polynomials are real. Let $\ti\s$ denote the image of the measure
$\s$ constructed in theorem \ref{t1} from $\bt$ to $[-1,1]$ by the
(usual) transformation $x=\cos \th=(z+z^{-1})/2, \ z=e^{i\th}\in \bt$.

The classical formula \cite[theorem 11.5]{sze} implies that
$$
p_{n}(x)\simeq  {\bar z^n \p_{2n}(z)+ z^n
\p_{2n}(\bar z)}
$$
if $\sigma\in (S)$.
We adjust the construction of theorem \ref{t1} such that $M_k$
are all even. Then $|p_{M_k/2}(1)|\simeq |\p_{M_k}(1)|$.

Consequently,
\begin{eqnarray*}
\e^+_{M_k/2}(\ti\s)&=&\int^1_{-1} |p_{M_k/2}(\ti\s)|^2\log^+
|p_{M_k/2}(\ti\s)| d\ti\s\\
&\ggg&  \epsilon^+_{M_k}(\sigma^k)\gtrsim \bar o(\sqrt{M_k})
\end{eqnarray*}
by theorem \ref{t1} as the value at $z=1$ alone provides the necessary growth of the entropy. \hfill$\Box$\bigskip

It is an interesting question to find a natural class of
measures for which the polynomial entropy integrals are bounded.
It is likely that by improving the technique of \cite{ra1, amb}
one can show that the Steklov's condition on the measure is not good
enough for the entropies to be uniformly bounded. In the meantime,
it is quite possible that fairly mild conditions are sufficient for the averages of $\epsilon_n$ to be under control.

\medskip
\nt{\it Acknowledgments.} The authors would like to thank A.~Aptekarev and A.~Mart\' inez-Finkelshtein for drawing their attention
to the problem and for numerous stimulating discussions. The first
author acknowledges the hospitality of the Institute for Advanced
Study and the Institute of Mathematics of Bordeaux, University of
Bordeaux 1 where  part of this research was done.

\end{document}